\documentclass[11pt]{amsart}

\usepackage{amsmath, amssymb, mathtools, palatino, euler, epic, eepic,floatflt, microtype}
\usepackage{graphicx}
\usepackage[usenames]{xcolor}
\usepackage{float}

\usepackage[margin=1 in]{geometry}
\parindent=8pt
\parskip=5pt

\linespread{1.15}

\definecolor{darkblue}{rgb}{0,0,0.7}
\definecolor{darkred}{rgb}{0.7,0,0}
\usepackage[colorlinks,linkcolor=darkred, citecolor=darkblue,
pagebackref=true,pdftex]{hyperref}

\newcommand\defi[1]{\textit{\color{blue}#1}}    

\newtheorem{thm}{Theorem}[section]
\newtheorem{prop}[thm]{Proposition}
\newtheorem{lemma}[thm]{Lemma}

\newtheorem{defn}[thm]{Definition}

\begin{document}
\title{Extendable shellability for $d$-dimensional complexes on $d+3$ vertices}

\author{Jared Culbertson, Anton Dochtermann, Dan P. Guralnik, Peter F. Stiller}
\address{Sensors Directorate, Air Force Research Laboratory, Dayton, OH}
\email{jared.culbertson@us.af.mil}
\address{Department of Mathematics, Texas State University, San Marcos, TX}
\email{dochtermann@txstate.edu}
\address{Department of Electrical \& Systems Engineering, University of Pennsylvania, Philadelphia, PA}
\email{guraldan@seas.upenn.edu}
\address{Department of Mathematics, Texas A\&M University, College Station, TX}
\email{stiller@math.tamu.edu}

\keywords{Shellable simplicial complex, extendably shellable complex, Simon's conjecture, chordal graph, exposed edges}


\date{\today}

\begin{abstract}
We prove that for all $d \geq 1$ a shellable $d$-dimensional complex with at most $d+3$ vertices is extendably shellable. The proof involves considering the structure of `exposed' edges in chordal graphs as well as a connection to linear quotients of quadratic monomial ideals.  
\end{abstract}

\maketitle
\section{Introduction}

A pure $d$-dimensional simplicial complex $\Delta$ is said to be \defi{shellable} if there exists an ordering of the facets $F_1, F_2, \dots, F_s$ such that for all $k = 2,3, \dots, n$ the simplicial complex induced by
\[\big(\bigcup_{i=1}^{k-1} F_i  \big ) \cap F_ k\]
\noindent
is pure of dimension $d-1$.    Shellability is an important combinatorial tool that has consequences for the topology of $\Delta$ as well as algebraic properties of its Stanley-Reisner (face) ring ${\mathbb K}[\Delta]$. Examples of shellable simplicial complexes include the independence complexes of matroids \cite{ProBil}, boundary complexes of simplicial polytopes \cite{BruMan}, as well as the skeleta of shellable complexes \cite{BjoWac}.  In particular for any $k = 1, 2, \dots, n-1$ the $k$-skeleton of a simplex on vertex set $[n]$ is shellable. It was recently shown \cite{GPPTW} that for every $d \geq 2$ deciding if a given pure $d$-dimensional simplicial complex is shellable is NP-hard.

Given a shellable complex a natural question to ask is whether one can get `stuck' in the process of building a shelling order.  A shellable complex $\Delta$ is said to be \defi{extendably shellable} if any shelling of a subcomplex of $\Delta$ can be extended to a shelling of $\Delta$.  Here a subcomplex of $\Delta$ is a simplicial complex $\Gamma$ whose set of facets consists of a subset of the facets of $\Delta$.  Any $2$-dimensional triangulated sphere (which is necessarily polytopal) is extendably shellable \cite{DanKle}, and Kleinschmidt \cite{Kle} has shown that any $d$-dimensional sphere with $d+3$ vertices is extendably shellable. Bj\"orner and Eriksson \cite{BjoEri} proved that independence complexes of rank 3 matroids are extendably shellable. On the other hand Ziegler \cite{Zie} has shown that there exist simplicial 4-polytopes that are not extendably shellable.

Simon \cite{Sim} has conjectured that every $k$-skeleton of a simplex is extendably shellable. One can see that the case of $k=0,1$ as well as $k=n-1, n-2$ are easy exercises. The Bj\"orner-Eriksson result establishes the $k=2$ case of Simon's conjecture by considering $U^3_n$, the uniform matroid of rank 3.  In \cite{BPZDec} Simon's conjecture was established for $k = n-3$ (a simpler proof was provided independently in \cite{Doc} based on results from \cite{CulGurSti}). Here we prove that much more is true.

\begin{thm} \label{thm:main}
Suppose $X$ is a shellable $d$-dimensional simplicial complex on at most 
$d+3$ vertices.  Then $X$ is extendably shellable.
\end{thm}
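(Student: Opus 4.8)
The plan is to translate the problem, via Alexander duality, into a statement about ``exposed'' edges of chordal graphs. When the number of vertices $n$ satisfies $n\le d+2$ the result is immediate: if $n=d+1$ then $X$ is a single simplex, and if $n=d+2$ then any two facets of $X$ meet in a face of dimension $d-1$, so \emph{every} ordering of \emph{every} subset of the facets of $X$ is a shelling and extendability is automatic. So assume $n=d+3$. Identify each facet $F$ of the full $d$-skeleton on $[n]$ with the complementary pair $\overline F=[n]\setminus F$, i.e.\ with an edge of the complete graph $K_n$; a pure $d$-dimensional subcomplex $\Gamma$ with facets $F_1,\dots,F_t$ then corresponds to the graph $H$ on $[n]$ with edge set $\{\overline F_1,\dots,\overline F_t\}$, and $X$ corresponds to a graph $G\supseteq H$. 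A direct verification --- this is exactly the combinatorial content of the standard equivalence between shellings of a pure complex $\Delta$ and linear quotient orderings of the Stanley--Reisner ideal of its Alexander dual $\Delta^\vee$ --- shows that $F_1,\dots,F_t$ is a shelling of $\Gamma$ if and only if the monomials $x_{\overline F_1},\dots,x_{\overline F_t}$ form a linear quotient ordering of the edge ideal $I(H)\subseteq \k[x_1,\dots,x_n]$. By Fröberg's theorem together with the result of Herzog--Hibi--Zheng on quadratic ideals, $I(H)$ has linear quotients if and only if the complement $\overline H$ is chordal. So the theorem reduces to the following: \emph{if $\overline G$ is chordal, then for every subgraph $H\subseteq G$ each linear quotient ordering of $I(H)$ extends to a linear quotient ordering of $I(G)$}.

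I would prove this by adding one edge of $G$ at a time. Suppose $H\subsetneq G$, that $\overline H$ is chordal, and that a linear quotient ordering of $I(H)$ has been fixed; let $\{a,b\}\in E(G)\setminus E(H)$. A short calculation with colon ideals shows that appending the monomial $x_a x_b$ keeps the ordering a linear quotient ordering precisely when $N_{\overline H}(a)\cap N_{\overline H}(b)$ is a clique of $\overline H$ --- equivalently, precisely when the edge $\{a,b\}$ of $\overline H$ lies in a \emph{unique} maximal clique of $\overline H$; call such an edge \emph{exposed}. Moreover deleting an exposed edge from a chordal graph leaves it chordal (a one-line argument: a new induced cycle would have to be an induced $4$-cycle through the deleted non-edge, whose other two vertices are common neighbours and hence adjacent). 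Thus, after appending such an edge, the complement of the enlarged subcomplex is again chordal and still contains $\overline G$; since each step shrinks the complement, after finitely many steps we reach $G$. Hence everything follows from the graph-theoretic lemma: \emph{if $\overline G\subseteq\overline C$ are chordal graphs on a common vertex set with $\overline G\neq\overline C$, then $\overline C$ has an exposed edge which is not an edge of $\overline G$} (equivalently, some edge of $\overline C\setminus\overline G$ can be deleted while preserving chordality). I expect this lemma to be the main obstacle.

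For the lemma I would induct on the number of vertices of $\overline C$. Choose a simplicial vertex $v$ of $\overline C$. If some edge at $v$ is not in $\overline G$, that edge is exposed in $\overline C$ and we are done. Otherwise $N_{\overline C}(v)=N_{\overline G}(v)$; if this common neighbourhood is also a clique in $\overline G$ (so $v$ is simplicial in $\overline G$ as well), then $\overline C-v$ and $\overline G-v$ are chordal with $\overline G-v\subsetneq \overline C-v$, and an exposed edge of $\overline C-v$ not in $\overline G-v$ supplied by induction is automatically exposed in $\overline C$ --- its two endpoints cannot both be neighbours of $v$, since $v$'s neighbourhood is a clique in $\overline G$ whereas that edge is missing from $\overline G$. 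The residual, and genuinely delicate, case is that for \emph{every} simplicial vertex $v$ of $\overline C$ the neighbourhood $N_{\overline C}(v)=N_{\overline G}(v)$ fails to be a clique in $\overline G$; then every edge of $\overline C\setminus\overline G$ joins two non-simplicial vertices, so one may restrict to the induced subgraph of $\overline C$ on its non-simplicial vertices and invoke induction there, the point being to check that an exposed edge found in that subgraph remains exposed in $\overline C$. This last check is where chordality of $\overline G$ does the real work: a failure would produce two common neighbours of the edge's endpoints that are non-adjacent, and tracing the relevant vertices back through the maximal cliques and minimal separators of $\overline C$ exhibits a forbidden induced $4$-cycle in $\overline G$. (That chordality of $\overline G$ is indispensable is visible already in the ``diamond'' $\overline C=K_4$ minus an edge: with $\overline G$ the $4$-cycle inside it, every exposed edge of $\overline C$ lies in $\overline G$ --- the graph-theoretic counterpart of the fact that non-shellable complexes need not be extendably shellable.)
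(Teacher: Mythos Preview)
Your reduction is essentially the paper's: the trivial cases $n\le d+2$, the passage from facets to their complementary $2$-element sets, and the identification of ``shelling step $\Leftrightarrow$ appending a generator with linear quotient $\Leftrightarrow$ deleting an exposed edge from a chordal graph'' all match (the paper packages this as its Lemma~2.2 rather than invoking Fr\"oberg and Herzog--Hibi--Zheng, but the content is the same). The key graph lemma you isolate---if $\overline G\subsetneq\overline C$ are chordal then $\overline C$ has an exposed edge outside $\overline G$---is exactly the paper's Lemma~3.4/Proposition~1.2.

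The gap is in your inductive proof of that lemma. You choose a simplicial vertex of the \emph{larger} graph $\overline C$, and in the residual case you pass to $\overline C[T]$ on the non-simplicial vertices, asserting that an exposed edge found there remains exposed in $\overline C$ because a failure would force an induced $4$-cycle in $\overline G$. This is false. Take $\overline C$ to be the fan on $\{1,\dots,5\}$ with edges $12,13,14,15,23,34,45$; its simplicial vertices are $S=\{2,5\}$ and $T=\{1,3,4\}$. Let $\overline G$ be the path $3\text{--}2\text{--}1\text{--}5\text{--}4$. Then $\overline G\subsetneq\overline C$ are chordal, and for each $v\in S$ one has $N_{\overline C}(v)=N_{\overline G}(v)$ while this common neighbourhood is \emph{not} a clique in $\overline G$: you are squarely in your residual case. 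Now $\overline C[T]=K_3$ on $\{1,3,4\}$ and every edge of it is exposed there and missing from $\overline G$, so induction may hand you $13$ (or $14$). But $13$ is \emph{not} exposed in $\overline C$, since $N_{\overline C}(1)\cap N_{\overline C}(3)=\{2,4\}$ with $24\notin E(\overline C)$; and there is no induced $4$-cycle in $\overline G$ to appeal to, because $\overline G$ is a tree. So the mechanism you propose for the ``delicate'' case does not work.

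The paper avoids this by making the opposite choice: pick a simplicial vertex $z$ of the \emph{smaller} graph $\overline G$. Then either $N_{\overline G}(z)=N_{\overline C}(z)$, in which case $z$ is simplicial in $\overline C$ as well and one deletes $z$ and inducts (the edge returned cannot have both endpoints in $N_{\overline C}(z)$, since that set is a clique already in $\overline G$); or $N_{\overline G}(z)\subsetneq N_{\overline C}(z)$, in which case one looks at the chordal graph $\overline C[N_{\overline C}(z)]$: if it is complete, any edge $zx$ with $x\in N_{\overline C}(z)\setminus N_{\overline G}(z)$ is exposed; otherwise it has two \emph{non-adjacent} simplicial vertices $v_1,v_2$, and since $N_{\overline G}(z)$ is a clique at most one of them lies in $N_{\overline G}(z)$, so one of $zv_1,zv_2$ is in $E(\overline C)\setminus E(\overline G)$ and is exposed in $\overline C$ by the ``$v_i$ simplicial in $N_{\overline C}(z)$'' criterion. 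No residual case arises.
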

This of course implies the $k = n-3$ case of Simon's conjecture and also provides a generalization of Kleinschmidt's results.  Our result is also best possible in the sense that there are 2-dimensional complexes on 6 vertices that are not extendably shellable (see \cite{MorTak}, \cite{Bjo}).  

Again the statement is easy to establish if $X$ has $d+1$ or $d+2$ vertices so our result concerns the case of $d+3$ vertices. To prove Theorem \ref{thm:main} we use the correspondence between shellings of $d$-dimensional simplicial complexes on $d+3$ vertices and linear quotients of monomial ideals generated by quadrics.  In previous work of the authors (\cite{CulGurSti}, \cite{Doc}) it is shown that such constructions are equivalent to removing exposed edges from chordal graphs. Using these ideas we will see that Theorem \ref{thm:main} follows from the following graph-theoretic result.

\begin{prop} \label{prop:main}
Suppose $G$ is a chordal graph and suppose $H \subset G$ is a subgraph of $G$ that is also chordal.  Then $H$ can be obtained from $G$ via a sequence of removals of exposed edges.
\end{prop}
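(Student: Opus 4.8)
The plan is to run the construction in reverse. I will use the characterization (immediate from the definitions) that an edge $e$ of a chordal graph $\Gamma$ is exposed precisely when $\Gamma\smallsetminus e$ is again chordal, equivalently when $e$ lies in a unique maximal clique of $\Gamma$: if $e=\{a,b\}$ lies in two maximal cliques then $a,b$ have non-adjacent common neighbours $p,q$ and $p,a,q,b$ is an induced $4$-cycle of $\Gamma\smallsetminus e$, while conversely any new hole of $\Gamma\smallsetminus e$ must pass through $a$ and $b$, whose two neighbours on that hole are non-adjacent common neighbours of $a,b$, forcing them into a second maximal clique. Granting this, a sequence of exposed-edge removals carrying $G$ to $H$ is, read backwards, exactly a chain $H=G_0\subsetneq G_1\subsetneq\cdots\subsetneq G_t=G$ of chordal graphs with $G_j=G_{j-1}+e_j$ a single-edge addition: in the reversed chain we delete $e_j$ from $G_j$, and this is an exposed edge of $G_j$ since $G_j\smallsetminus e_j=G_{j-1}$ is chordal. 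Building such a chain one edge at a time from the bottom reduces Proposition~\ref{prop:main} to the following: \emph{if $H\subsetneq G$ are chordal graphs on the same vertex set, then there is an edge $e\in E(G)\smallsetminus E(H)$ with $H+e$ chordal.} (Given this, repeatedly augment $H$, using that the current graph stays a proper chordal subgraph of $G$ until it equals $G$; the process terminates because the edge set grows inside $E(G)$.)

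I would prove the augmentation statement by induction on the number of vertices. Note first that for $\{x,y\}\notin E(H)$, the graph $H+\{x,y\}$ is non-chordal exactly when $H$ has an induced $x$--$y$ path of length $\ge 3$ (a hole of $H+\{x,y\}$ must use the new edge, and removing it leaves such a path; conversely such a path closes up to a hole). Assume toward a contradiction that $H+e$ is non-chordal for every $e\in M:=E(G)\smallsetminus E(H)$, so every $e=\{x,y\}\in M$ admits an induced $x$--$y$ path in $H$ of length $\ge 3$. Pick $e^{*}=\{x^{*},y^{*}\}\in M$ together with such a path $P\colon x^{*}=u_0,u_1,\dots,u_p=y^{*}$ in $H$ with $p$ as small as possible over all of $M$; thus $p\ge 3$. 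Since $x^{*}y^{*}\in E(G)$, the vertices $u_0,\dots,u_p$ span a cycle of length $p+1\ge 4$ in $G$, so $G[\{u_0,\dots,u_p\}]$ is a chordal graph containing this cycle. A chordal graph is complete or has two non-adjacent simplicial vertices; as $u_0\sim_G u_p$, in either case $G[\{u_0,\dots,u_p\}]$ has a simplicial vertex $u_i$ with $1\le i\le p-1$. Then $N_G(u_i)\cap\{u_0,\dots,u_p\}$ is a clique containing $u_{i-1},u_{i+1}$, so $e':=u_{i-1}u_{i+1}\in E(G)$; moreover $e'\ne e^{*}$ (which would force $p=2$), and $e'\notin E(H)$ since $P$ is induced, so $e'\in M$.

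Set $z:=u_i$ and $W:=N_H(z)$; then $u_{i-1},u_{i+1}\in W$, so $e'\in E(G[W])\smallsetminus E(H[W])$ and $H[W]\subsetneq G[W]$ are chordal with $|W|<|V|$ (as $z\notin W$). By the inductive hypothesis there is $f\in E(G[W])\smallsetminus E(H[W])\subseteq M$ with $H[W]+f$ chordal. I claim $H+f$ is chordal, contradicting the standing assumption and finishing the proof. If not, write $f=\{c,d\}$ with $c,d\in W=N_H(z)$ and take an induced path $R\colon c=r_0,\dots,r_k=d$ of length $k\ge 3$ in $H$. The key observation is that $z$ is adjacent in $H$ to \emph{every} vertex of $R$: we have $z\sim_H r_0,\,z\sim_H r_k$, and $z\notin V(R)$ (a vertex of the induced path $R$ adjacent to both its ends would force $k\le 2$); and if $z$ were non-adjacent to some internal $r_t$, then for the largest $t^{-}<t$ and smallest $t^{+}>t$ with $z\sim_H r_{t^{-}},\,z\sim_H r_{t^{+}}$, the cycle $r_{t^{-}},r_{t^{-}+1},\dots,r_{t^{+}},z$ would be an induced cycle of length $\ge 4$ in $H$ --- impossible. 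Hence $V(R)\subseteq W$, so $R$ is an induced path of length $\ge 3$ in $H[W]$ between $c$ and $d$, making $H[W]+f$ non-chordal and contradicting the choice of $f$.

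The hard part is precisely the step that makes the induction close up. The naive move --- close $P$ into a cycle in $G$, take a chord (necessarily an edge of $M$), and recurse on it --- does not terminate, since the chord may have ``span $2$'' yet still be joined by a long induced $H$-path, so no parameter decreases. The fix is to take the chord $e'$ to be one supplied by a simplicial vertex $u_i$ of $G[\{u_0,\dots,u_p\}]$ lying strictly between the ends of $e^{*}$, and then to descend not to a shorter cycle but into the strictly smaller induced subgraph $G[N_H(u_i)]$ --- which is justified exactly by the adjacency claim, trapping any $H$-detour between two $H$-neighbours of $u_i$ inside $N_H(u_i)$. The small but indispensable point is to choose the simplicial vertex away from $\{u_0,u_p\}$, using $u_0\sim_G u_p$, so that $u_i$ is adjacent in $H$ to \emph{both} endpoints of $e'$ and the descent is legitimate.
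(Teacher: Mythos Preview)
Your argument is correct. The reduction to the augmentation statement is clean, the induction on $|W|<|V|$ is set up properly, and the ``trapping'' claim---that any induced $H$-path between two $H$-neighbours of $z$ must live entirely inside $N_H(z)$---is exactly what makes the descent work. One harmless redundancy: you choose $e^{*}$ to minimize $p$, but nowhere in the argument is that minimality invoked (the induction is on the vertex count, not on path length), so any $e^{*}\in M$ would have done.

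The paper's route is genuinely different. It proves the \emph{dual} one-step statement (Lemma~\ref{lem:D}): there is an edge $e\in E(G)\smallsetminus E(H)$ that is exposed \emph{in $G$}, i.e.\ $G-e$ is chordal and still contains $H$. The paper then peels edges off $G$ from the top down, whereas you build $H$ up from the bottom and reverse the chain. Their proof of the one-step lemma also pivots on a simplicial vertex, but of $H$ (and then of $G[N_G(z)]$), not of the induced subgraph $G[V(P)]$ that you use; and their induction deletes a vertex rather than passing to a neighbourhood. Conceptually, your augmentation lemma is the ``one-edge fill-in'' statement familiar from minimal triangulation theory, while the paper's Lemma~\ref{lem:D} is its erasure-side mirror; each implies Proposition~\ref{prop:main} immediately, and each relies on the same two basic facts (exposed $\Leftrightarrow$ deletion stays chordal; a chordal graph is complete or has two nonadjacent simplicial vertices). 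Your proof is a bit longer but arguably more self-contained at the cycle level; theirs is shorter once Lemma~\ref{lem:C} is in hand.
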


In the next section we recall the relevant definitions and set notation. In Section \ref{Sec:proofs} we prove Proposition \ref{prop:main} and show how it leads to a proof of Theorem \ref{thm:main}.

{\bf Acknowledgements.} We thank an anonymous referee who provided helpful comments on a earlier draft of the paper.

\section{Notation}
 For a finite set $E$ we let $\binom{E}{k}$ denote the family of $k$-subsets of $E$.  A subset $C \subset \binom{E}{k}$ will be called a \defi{$k$-clutter} on vertex set $E$.  For a $k$-clutter $C$ we use $\overline C$ to denote the (pure) $(k-1)$-dimensional simplicial complex generated by $C$, that is the collection of all subsets of elements of $C$ (including the empty set $\emptyset$).  Given a $k$-clutter $C$ a \defi{shelling step} is the addition of a some $e \in \binom{E}{k} - C$ to $C$ such that
\[\overline e - \overline C = \{f: d \subset f \subset e\},\]
\noindent
for some $d \subset e$.  In other words the intersection of $\overline e$ with $\overline C$ is a pure simplicial complex of dimension $k-2$.  


If $\Delta$ is pure $d$-dimensional simplicial complex $\Delta$ with shelling order of its facets $(F_1, F_2, \dots, F_s)$, the \defi{restricted set} of the facet $F_i$ is the set of $(d-1)$-dimensional faces in the intersection of the facet $F_i$ with the subcomplex $F_1 \cup F_2 \cup \cdots \cup F_{i-1}$.

We next recall some basic notions from graph theory. For us a \defi{graph} $G$ consists of a finite set of vertices $V(G)$  along with a set $E(G)$ of unordered pairs of elements of $V(G)$. In particular our graphs are undirected and simple, with no loops or multiple edges. An element of $E(G)$ will be written $vw$, with set brackets and comma suppressed.  If $G$ is a graph on vertex set $V(G)$ then a \defi{subgraph} $H \subset G$ is a graph on a vertex set $V(H) \subset V(G)$ with the property that if $e \in E(H)$ then $e \in E(G)$. An \defi{induced} subgraph $H \subset G$ has the property that whenever $v,w \in V(H)$ and $vw \in E(G)$ we have $vw \in E(H)$.  If $G$ is a graph its \defi{complement graph} $G^C$ has the same vertex set $V(G)$ and edge set consisting of the edges not present in $G$.

If $S \subset V(G)$ is a subset of vertices of $G$ we let $G[S]$ denote the subgraph induced by $S$.  A \defi{clique} in a graph $G$ is an induced subgraph $K \subset G$ with the property that all pairs of vertices in $V(K)$ form edges in $K$ (so that $K$ is a \emph{complete graph}). If $v \in V(G)$ is a vertex of $G$ the \defi{neighborhood} of $v$ in $G$ is defined as
\[N_G(v) = \{w \in V(G): vw \in E(G) \}.\]
A vertex $v$ is \defi{simplicial} if the subgraph induced on $N(V)$ is a complete graph. We recall the notion of \emph{exposed edges} introduced in \cite{CulGurSti}.  

\begin{defn}
Suppose $G$ is a graph.
An edge $e \in E(G)$ is said to be \defi{exposed} if it is uniquely contained in a maximal clique.
If in addition the edge is properly contained in the clique (i.e. $e$ is contained in some triangle) then we say that $e$ is \defi{properly exposed}.
We refer to the operation of removing a (properly) exposed edge from $G$ as an \defi{(proper) erasure}.
A sequence of edges $(e_1,\ldots,e_k)$ is an \defi{(proper) erasure sequence}, if $e_i$ is a (properly) exposed edge of the graph $G-\{e_s\,|\,s<i\}$ for all $i=1,\ldots,k$.
\end{defn}

%
An edge which is exposed but not properly exposed will also be called a \defi{facet edge}.
A \defi{chordal graph} is a graph with no induced cycles of length four or more.
In \cite{CulGurSti} it is shown that a graph $G$ on vertex set $[n] = \{1,2, \dots, n\}$ is chordal if and only if $G$ can be obtained from the complete graph $K_n$ via a sequence of erasures.
Furthermore, this $G$ is connected (and chordal) if and only if each edge in that sequence is \emph{properly} exposed. 

We note that if we fix $V(G) = [n]=  \{1,2,\dots, n\}$ to be the vertex set of the graph $G$, the collection of edges $E(G) = \{e_1, e_2, \dots e_m\}$ corresponds to a $(n-2)$-clutter $X(G) \subset \binom{[n]}{n-2}$ with $(n-2)$-subsets given by $F_i = [n] \backslash e_i$.  
This correspondence can of course be reversed, so that an $(n-2)$-clutter $X$ corresponds to a graph $G(X)$.
We will use the following lemma from \cite{Doc}.

\begin{lemma}\label{lem:shelling}
An ordered set of edges $e_1, e_2, \dots, e_k$ is an erasure sequence in $K_n$ (resulting in a chordal graph $G$) if and only if the corresponding ordered set $F_1, F_2, \dots, F_k$ is a sequence of shelling steps (resulting in the $(n-3)$-dimensional simplicial complex $\overline{X(G^C)}$).
An edge $e_i$ is \emph{properly} exposed if and only if the restricted set of $F_i$ consists of less than $n-2$ elements.
\end{lemma}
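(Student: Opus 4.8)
The plan is to prove both assertions one shelling step at a time, so that the statements about sequences follow by induction on $i$. Fix $i$ and write $G_{i-1} = K_n - \{e_1, \dots, e_{i-1}\}$ for the graph obtained after the first $i-1$ erasures, and $C_{i-1} = \{F_1, \dots, F_{i-1}\}$ for the corresponding clutter, where $F_j = [n] \setminus e_j$. The single step to analyze is then: $e_i$ is exposed in $G_{i-1}$ if and only if adjoining $F_i$ to $C_{i-1}$ is a shelling step. Granting this equivalence for each $i$, the sequence statement is immediate, since by definition $(e_1, \dots, e_k)$ is an erasure sequence exactly when each $e_i$ is exposed in $G_{i-1}$, and $(F_1, \dots, F_k)$ is a sequence of shelling steps exactly when each $F_i$ adjoins to $C_{i-1}$ as a shelling step. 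The base case $i=1$ is the degenerate one: every edge of $K_n$ is exposed, and adjoining the first facet is always a shelling step with restriction face $\emptyset$, where one uses the convention that the empty clutter generates the void complex.

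The engine of the proof is the order-reversing complementation $\sigma \mapsto T := [n] \setminus \sigma$ on subsets of $[n]$. First I would record the basic translation: for a face $\sigma$ one has $\sigma \in \overline{C_{i-1}}$ if and only if $\sigma \subset F_j$ for some $j < i$, i.e. $e_j \cap \sigma = \emptyset$, i.e. $e_j \subset T$. Thus $\sigma \in \overline{C_{i-1}}$ precisely when $T$ contains one of the edges $e_1, \dots, e_{i-1}$. Specializing to faces $\sigma \subset F_i$ (equivalently $e_i \subset T$), the \emph{new} faces are exactly those $\sigma$ whose complement $T$ contains $e_i$ but none of $e_1, \dots, e_{i-1}$. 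Since the non-edges of $G_{i-1}$ are precisely $e_1, \dots, e_{i-1}$, the condition that $T$ contains none of these says exactly that $T$ is a clique of $G_{i-1}$. Hence complementation furnishes an order-reversing bijection
\[\overline{F_i} - \overline{C_{i-1}} \;\longleftrightarrow\; \{T : T \text{ is a clique of } G_{i-1} \text{ with } e_i \subset T\}.\]

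With this dictionary in hand the shelling-step condition translates cleanly. Adjoining $F_i$ is a shelling step iff $\overline{F_i} - \overline{C_{i-1}} = \{f : d \subset f \subset F_i\}$ for some $d \subset F_i$; under complementation (writing $D := [n] \setminus d$) this says that the cliques of $G_{i-1}$ containing $e_i$ are exactly the sets $T$ with $e_i \subset T \subset D$. The heart of the argument, and the step I expect to require the most care, is checking that this interval description of the cliques containing $e_i$ is equivalent to $e_i$ being exposed. In one direction, if $e_i$ lies in a unique maximal clique $D$, then every clique containing $e_i$ lies in $D$, and conversely every subset of $D$ containing $e_i$ is a clique, giving the interval. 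In the other direction, if the cliques containing $e_i$ form the interval below some $D$, one checks that $D$ itself is a clique (it is the top of the interval), and that it is maximal and the unique maximal clique on $e_i$, since any clique properly containing $D$ would still contain $e_i$, contradicting the interval bound $T \subset D$. This yields the desired per-step equivalence, and incidentally identifies the restriction face as $d = [n] \setminus D$.

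Finally, the statement about properly exposed edges falls out of this identification by a dimension count. The restricted set of $F_i$ consists of the ridges (codimension-one faces, of cardinality $n-3$) of $F_i$ lying in $\overline{C_{i-1}}$; writing a ridge as $F_i \setminus \{v\}$ with $v \in F_i$, the shelling description shows such a ridge is old precisely when $v \in d$, so the restricted set has cardinality $|d| = n - |D|$. Therefore the restricted set has fewer than $n-2$ elements if and only if $|D| > 2$, i.e. if and only if the unique maximal clique $D$ properly contains the edge $e_i$, which is exactly the condition that $e_i$ be properly exposed. This completes the plan.
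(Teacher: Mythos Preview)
The paper does not actually prove this lemma; it is quoted from \cite{Doc} without argument. Your proof is correct and self-contained. The key device---complementation $\sigma \mapsto [n]\setminus\sigma$, which sends the interval $\{f : d \subset f \subset F_i\}$ of new faces to the interval $\{T : e_i \subset T \subset D\}$ of cliques of $G_{i-1}$ containing $e_i$---is exactly the right translation, and your verification that this interval description is equivalent to $e_i$ lying in a unique maximal clique $D$ is clean. The count $|d| = n - |D|$ for the size of the restricted set then gives the properly-exposed characterization immediately. One tiny remark: your handling of the base case $i=1$ depends on taking $\overline{\emptyset}$ to be the void complex, whereas the paper's phrasing (``including the empty set $\emptyset$'') is slightly ambiguous on this point; but this is a harmless convention issue and the first step of any shelling is universally taken to be valid with restriction face $\emptyset$, so nothing is lost.
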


\section{Proofs}\label{Sec:proofs}

In this section we provide the proofs of Proposition \ref{prop:main} and Theorem \ref{thm:main}, which  will follow from a number of elementary graph-theoretic lemmas.
Having obtained Proposition \ref{prop:main}, we discovered that a slightly weaker result had already been proved as Lemma 2 of \cite{RosTarLue}, though that result does not characterize the class of edges allowed for removal.
The notion of exposed edges allows for a new and simplified treatment of chordality that is independent of any particular structural descriptions of a chordal graph (such as the one obtained from a fixed vertex elimination ordering), resulting in a much simpler proof of Proposition~\ref{prop:main} than that found in~\cite{RosTarLue}.
Believing this to be of some independent interest, we include a self-contained discussion here, in the form of Lemmas~\ref{lem:A}--\ref{lem:D}.

We first collect some simple observations regarding exposed edges which follow straight from the definitions:
\begin{lemma} \label{lem:A}
For any graph $G$ and any edge $xy \in E(G)$ the following are equivalent:
\begin{enumerate}
 \item
 $xy \in G$ is exposed (respectively, properly exposed).
 \item
 $N_G(x) \cap N_G(y)$ is complete (resp. complete and nonempty).
 \item
 $y$ is a simplicial (resp. simplicial and non-isolated) vertex of $N_G(x)$.
 \item
 $x$ is a simplicial (resp. simplicial and non-isolated) vertex of $N_G(y)$.
\end{enumerate}
\end{lemma}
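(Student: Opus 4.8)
The plan is to reduce all four conditions to a statement about the single auxiliary graph $H := G[N_G(x) \cap N_G(y)]$, since a clique of $G$ containing both $x$ and $y$ is exactly a set $\{x,y\} \cup K$ with $K$ a clique of $H$. Concretely, I would first check that $K \mapsto K \setminus \{x,y\}$ is a bijection between the maximal cliques of $G$ containing $xy$ and the maximal cliques of $H$: if $K \supseteq \{x,y\}$ is a clique of $G$ and some $w \in V(H)$ adjacent in $H$ to every vertex of $K \setminus \{x,y\}$ were not already in $K$, then $K \cup \{w\}$ would be a strictly larger clique of $G$; the reverse implication is identical, so maximality is preserved in both directions. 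Hence $xy$ lies in a unique maximal clique of $G$ if and only if $H$ has a unique maximal clique. I would then invoke the elementary fact that a finite graph has a unique maximal clique if and only if it is complete --- if it is not complete, extend two nonadjacent vertices to maximal cliques, which are forced to be distinct; the converse (and the vacuous case $V(H) = \emptyset$) is clear. Together these give (1) $\Leftrightarrow$ (2) in the exposed reading.

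For the properly exposed reading, note that $xy$ is properly contained in its maximal clique precisely when $xy$ lies in a triangle, i.e.\ when $x$ and $y$ have a common neighbor, i.e.\ when $V(H) = N_G(x) \cap N_G(y)$ is nonempty; combining this with the previous paragraph upgrades ``complete'' to ``complete and nonempty'' and yields the equivalence in that reading as well. The degenerate case $V(H) = \emptyset$ is exactly the facet-edge case, where $\{x,y\}$ is itself the unique maximal clique, so the edge is exposed but not properly exposed.

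It remains to match (2) with (3) and (4), which is immediate from the definition of a simplicial vertex: since $xy \in E(G)$ we have $y \in N_G(x)$, and the neighborhood of $y$ inside the induced subgraph $G[N_G(x)]$ is exactly $N_G(x) \cap N_G(y) = V(H)$, the subgraph induced on it being $H$ itself. So $y$ is simplicial in $G[N_G(x)]$ iff $H$ is complete, and non-isolated there iff $V(H) \neq \emptyset$, which is (2) $\Leftrightarrow$ (3); and (2) $\Leftrightarrow$ (4) follows by interchanging $x$ and $y$. I expect no genuine obstacle here, as the whole lemma is a definition chase; the one step that warrants care is the clique correspondence above together with the degenerate case $N_G(x) \cap N_G(y) = \emptyset$, where one must remember that the empty graph is vacuously complete so that facet edges still count as exposed.
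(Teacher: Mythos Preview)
Your argument is correct and complete; the paper itself gives no proof of this lemma, simply noting that these are ``simple observations \ldots\ which follow straight from the definitions.'' Your definition-chase via the auxiliary graph $H = G[N_G(x)\cap N_G(y)]$ and the clique bijection $K \mapsto K\setminus\{x,y\}$ is exactly the unpacking the paper leaves to the reader, and your treatment of the degenerate (facet-edge) case is handled correctly.
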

%
%
The basic relationship between exposed edges and chordality is as follows:
\begin{lemma}\label{lem:B}
Suppose $G$ is a chordal graph.
Then:
\begin{enumerate}
 \item An edge $e \in E(G)$ is exposed if and only if $G - e$ is chordal;
 \item if $v \in V (G)$ is simplicial, every edge $xv \in E(G)$ is exposed.
\end{enumerate}
\end{lemma}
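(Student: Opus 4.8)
\emph{Proof plan.} Everything will be deduced from Lemma~\ref{lem:A}, which recasts ``$xy$ is exposed'' as the local condition that $G[N_G(x)\cap N_G(y)]$ is a complete graph. With that in hand, part (2) needs no chordality at all: if $v$ is simplicial and $xv\in E(G)$, then $x\in N_G(v)$ and $G[N_G(v)]$ is complete, so $x$ is trivially a simplicial vertex of $N_G(v)$, and $xv$ is exposed by Lemma~\ref{lem:A}(4).

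For part (1) I would establish the two implications separately, each by contraposition. For ``$G-e$ chordal $\Rightarrow$ $e$ exposed'': if $e=xy$ is not exposed, Lemma~\ref{lem:A} supplies distinct $a,b\in N_G(x)\cap N_G(y)$ with $ab\notin E(G)$; since a vertex is never its own neighbour, $a,b\notin\{x,y\}$, so $x,a,y,b$ are four distinct vertices, the four edges $xa,ay,yb,bx$ all survive in $G-e$, and neither $xy$ nor $ab$ lies in $G-e$; hence $\{x,a,y,b\}$ induces a $4$-cycle in $G-e$, so $G-e$ is not chordal. This direction is just unwinding the definitions.

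The reverse implication ``$e$ exposed $\Rightarrow$ $G-e$ chordal'' carries the real content. Again arguing contrapositively, suppose $C=(v_1,\dots,v_k)$ with $k\ge 4$ is an induced cycle of $G-e$. Since $G$ is chordal this $C$ is not induced in $G$, and because $G$ and $G-e$ differ only in the single edge $e$, that edge must be a chord of $C$ in $G$, and moreover $E(G[V(C)])=E(C)\cup\{e\}$. Write $e=v_1v_j$ with $3\le j\le k-1$. The key step --- the one I expect to be the main obstacle --- is to show $k=4$: the cycle $(v_1,v_2,\dots,v_j)$, closed by the chord $e$, is an induced cycle of $G$ of length $j$ (any chord of it would be an edge of $G$ with both ends among the $v_i$, hence one of the edges of $C$ or the edge $e$, none of which qualifies), so chordality of $G$ forces $j=3$; a symmetric argument applied to the cycle $(v_1,v_3,v_4,\dots,v_k)$ forces $k=4$. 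Then $C=(v_1,v_2,v_3,v_4)$ with $e=v_1v_3$, and $v_2,v_4$ are common neighbours of $v_1$ and $v_3$ with $v_2v_4\notin E(G)$, so $N_G(v_1)\cap N_G(v_3)$ is not complete and Lemma~\ref{lem:A} says $e$ is not exposed, as required. The only thing to be careful about throughout is the identity $E(G[V(C)])=E(C)\cup\{e\}$, which pins down precisely which pairs of cycle-vertices can be adjacent in $G$.
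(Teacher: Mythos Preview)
Your proposal is correct and follows essentially the same route as the paper: the easy direction of (1) and part (2) are handled identically via Lemma~\ref{lem:A}, and for the substantive direction of (1) both you and the paper reduce an offending induced cycle in $G-e$ to length exactly $4$ by splitting it along the chord $e$ and invoking chordality of $G$, then read off two nonadjacent common neighbours of the endpoints of $e$. The only cosmetic difference is that you argue by contraposition and spell out the two sub-cycles explicitly, whereas the paper argues directly and asserts that ``$C\cup xy$ has an induced cycle of length greater than $3$'' when $|C|>4$.
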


\begin{proof}
For the implication $\Rightarrow$ of $(1)$ we follow the proof of Theorem 8 in \cite{CulGurSti}.
Suppose $G$ is a chordal graph and $xy \in \partial G$ is an exposed edge.
If $C$ is an induced cycle in $H:= G - e$ such that $\{x,y\} \not\subset C$ (i.e., $C$ possibly contains $x$ or $y$, but not both), then $C$ is also an induced cycle of $G$ and so of length 3.
Otherwise, suppose $\{x,y\} \subset C$ and $|C| > 3$. Note that if $|C| > 4$, then the induced subgraph $C^\prime = C \cup xy$ of $G$ has an induced cycle of length greater than $3$, a contradiction.
This leaves us with the case where $C = x-v_1-y-v_2-x$ for some $v_1, v_2$. Since $xy$ is exposed in $G$, we must have $v_1v_2 \in E(G)$, since otherwise $xy$ would lie in two distinct maximal cliques so that $xy$ would not be exposed.
However, $v_1v_2 \in E(G)$ (hence in $E(H)$) means that $C$ would not be an induced cycle in $H$, a contradiction.

Conversely, note that if $e = xy$ is not exposed, then there are two non adjacent vertices $z, w \in N_G(x) \cap N_G(y)$.
Thus $x-z-y-w-x$ is an induced four cycle in $G-e$.

For $(2)$, notice that if $v \in G$ is simplicial then every vertex $x \in N_G(v)$ is simplicial in $N_G(v)$.
Thus either $N_G(v) = \{x\}$ so that $xv$ is a facet edge, or else Lemma \ref{lem:A} gives the result.
\end{proof}

The following result for chordal graphs is well-known, but is usually derived in the literature from the characterization of chordality via vertex elimination orderings.
Here we derive it directly from the hereditary property of chordality and the preceding characterization of exposed edges.
\begin{lemma}\label{lem:C}
Let $G$ be a chordal graph. Then:
\begin{enumerate}
    \item every facet edge in $G$ is a cut edge;
    \item either $G$ is complete or $G$ has at least two nonadjacent simplicial vertices.
\end{enumerate}
\end{lemma}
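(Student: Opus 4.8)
The plan is to prove the two parts in order, using part~(1) as an ingredient for part~(2). For part~(1), let $xy$ be a facet edge, so that by Lemma~\ref{lem:A} the common neighborhood $N_G(x)\cap N_G(y)$ is empty. If $xy$ were not a cut edge, pick a shortest $x$--$y$ path $P$ in $G-xy$; it has length $\ge 2$, and closing it up with the edge $xy$ produces a cycle which is induced in $G$ (a shortest path is induced, and $xy$ is the only edge of $G$ missing from $G-xy$, so the cycle has no chord). Chordality then makes this cycle a triangle, whose middle vertex is a common neighbor of $x$ and $y$ --- a contradiction. Hence $xy$ is a cut edge.

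For part~(2) I would induct on $|V(G)|$, the cases $|V(G)|\le 2$ being trivial. Assume $G$ is not complete, pick nonadjacent vertices $a,b$, and let $S$ be an inclusion-minimal set whose removal places $a$ and $b$ in distinct components of $G-S$ (with $S=\emptyset$ allowed, covering disconnected $G$); write $A,B$ for the vertex sets of the components of $G-S$ containing $a$ and $b$. The heart of the argument is that $S$ is a clique: minimality of $S$ forces every vertex of $S$ to have a neighbor in $A$ and a neighbor in $B$, so for distinct $u,v\in S$ one finds a shortest $u$--$v$ path through $A$ and a shortest $u$--$v$ path through $B$; if $uv\notin E(G)$ these combine into an induced cycle of length $\ge 4$ (no chords, since each path is shortest, there are no $A$--$B$ edges, and $uv$ is a non-edge), contradicting chordality.

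Granting the clique-separator claim, $G[A\cup S]$ is chordal, since chordality is hereditary for induced subgraphs, and it has fewer vertices than $G$; so it is either complete or, by induction, has two nonadjacent simplicial vertices. Because $S$ is a clique, in either case one can extract a vertex $v_A\in A$; its $G$-neighborhood is contained in $A\cup S$ (an $A$-vertex has no neighbors outside its component except possibly in $S$) and agrees with its neighborhood in $G[A\cup S]$, which is a clique, so $v_A$ is simplicial in $G$. Symmetrically $G[B\cup S]$ gives $v_B\in B$ simplicial in $G$. As $A$ and $B$ are distinct components of $G-S$, the vertices $v_A$ and $v_B$ are distinct and nonadjacent, completing the induction.

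I expect the clique-separator step to be the main obstacle --- both verifying that the cycle it produces has no chords, and then carefully transferring simpliciality from $G[A\cup S]$ back to $G$ (checking that neighborhoods of $A$-vertices stay inside $A\cup S$, and that at least one of the two simplicial vertices furnished by induction lies outside $S$). One could instead try to drive part~(2) off of part~(1), deleting a facet edge when one exists and recursing on the components of the disconnected graph that results; but the case of a $2$-connected non-complete chordal graph, which has no facet edge, seems to require the clique-separator idea in any event, so I would keep the separator argument as the main line.
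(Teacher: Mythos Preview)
Your proof is correct. Part~(1) is essentially the paper's argument, with the step ``chordality forces a common neighbor'' spelled out explicitly via a shortest path.

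For part~(2) you and the paper take genuinely different routes. You run Dirac's classical minimal-separator argument: separate two nonadjacent vertices by a minimal $S$, show $S$ is a clique, recurse on $G[A\cup S]$ and $G[B\cup S]$, and pull the resulting simplicial vertices back to $G$. The paper instead stays entirely within its exposed-edge framework. It inducts on $|V|+|E|$, reduces to the connected case, and uses part~(1) to rule out facet edges; then, applying the induction hypothesis to $G[N_G(v)]$ together with Lemma~\ref{lem:A}, it produces a \emph{properly} exposed edge $e=xy$, deletes it, and transfers simpliciality from $G-e$ back to $G$ (a simplicial vertex of $G-e$ cannot have both $x$ and $y$ as neighbors, since its neighborhood is a clique in $G-e$). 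Your closing remark actually anticipates this kind of strategy but dismisses it because facet edges alone cannot handle the $2$-connected case; the paper's point is precisely that \emph{properly} exposed edges pick up the slack there. Your argument is the standard textbook proof and is fully self-contained; the paper's is less familiar but is the reason the lemma is included at all---to demonstrate that the basic structure theory of chordal graphs can be rebuilt from exposed edges without invoking separators or elimination orderings.
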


\begin{proof} To prove (1), consider a facet edge $wz$ in $G$.
If $H:=G-wz$ is connected, then there is a path from $z$ to $w$ in $G-wz$.
Since $G$ is chordal, we conclude there is a vertex $u\in N_G(w)\cap N_G(z)$.
But this contradicts $wz$ being a facet edge.

To prove (2), suppose $G$ is a counterexample with $f(G) := |E(G)| + |V(G)|$ as small as possible.
Then $G$ is chordal, incomplete (obviously), and connected:
otherwise, each connected component is a chordal graph that is not a counterexample and we can take one simplicial vertex from each component to obtain an independent set of simplicial vertices of cardinality at least 2.

Also, $G$ may not contain a facet edge $wz$ since otherwise $G-wz$ would be chordal with $f(G-wz) < f(G)$, contradicting minimality:
indeed, since $G-wz$ is disconnected by (1), each connected component $K$ of $G-wz$ would contain at least one simplicial vertex $u_K$ that is neither $w$ nor $z$, and $u_K$ is necessarily simplicial in $G$.

Next, we claim that $G$ has a properly exposed edge. To see this let $v \in V(G)$ be any vertex of $G$. Thus $f(N_G(v)) < f(G)$ and $G[N_G(v)]$ is chordal (all induced subgraphs of a chordal graph are chordal). Therefore either (i) $G[N_G(v)]$ is complete, meaning $v$ is a simplicial vertex and every edge incident to $v$ is exposed (and there must be one such edge because $G$ is connected and not complete); or (ii) $N_G(v)$ contains two simplicial vertices, each of which gives rise to a properly exposed edge in $G$ that is incident to $v$.
 
Now, let $e = xy$ be a properly exposed edge in $G$. Then $G-e$ is not complete and is chordal, and $f(G - e) < f(G)$.  Hence there exist $v_1, v_2$ nonadjacent simplicial vertices in $G - e$. Since $N_{G-e}(v_i)$ is complete, we have that $\{x, y\} \in N_{G-e}(v_i)$ and so $G[N_G(v_i)] = G[N_{G-e}(v_i)]$. Thus $v_1, v_2$ are also simplicial in $G$.
\end{proof}

The following lemma is the promised (mild) strengthening of Lemma 2 in \cite{RosTarLue}, with a new and simplified proof using exposed edges rather than perfect elimination orderings.
\begin{lemma} \label{lem:D}
Suppose $H$ and $G$ are chordal graphs with $V(H) = V(G)$, $E(H) = E$, $E(G) = E \cup F$ where $E \cap F = \emptyset$. Then $F$ contains an exposed edge of $G$. Equivalently, there exists an edge $e$ of $G$ such that $G - e$ is chordal and contains $H$.
\end{lemma}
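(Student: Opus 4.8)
The plan is to induct on the number of edges $|F|$ that need to be removed, or equivalently on $f(G) := |E(G)| + |V(G)|$; the heart of the matter is the base-case-flavored claim that \emph{some} edge of $F$ is exposed in $G$, after which one passes to $G - e \supset H$ (still chordal by Lemma~\ref{lem:B}(1)) and recurses. So I would reduce immediately to proving: if $H \subsetneq G$ are chordal on the same vertex set, then $F = E(G) \setminus E(H)$ contains an edge that is exposed in $G$.

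First I would dispose of the disconnected and complete-subcase reductions in the spirit of Lemma~\ref{lem:C}(2). If $G$ is disconnected, restrict attention to a connected component of $G$ in which $H$ omits at least one edge, so we may assume $G$ is connected. If $G$ is complete, then any edge of $F$ is a facet edge or properly exposed — in fact in $K_n$ every edge is exposed — so we are done; hence assume $G$ is incomplete and connected. Next I would like to locate a simplicial vertex $v$ of $G$ that is an endpoint of some edge of $F$: by Lemma~\ref{lem:C}(2), $G$ has two nonadjacent simplicial vertices $v_1, v_2$; by Lemma~\ref{lem:B}(2) every edge incident to a simplicial vertex is exposed in $G$, so if either $v_1$ or $v_2$ is incident to an edge of $F$ we are finished. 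The problematic configuration is thus when \emph{all} edges incident to $v_1$ and to $v_2$ already lie in $E(H)$.

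To handle that obstruction I would argue by induction on $f(G)$ (or on $|V(G)|$). Suppose $v = v_1$ is a simplicial vertex of $G$ all of whose incident edges lie in $H$. Since $H \subseteq G$ and $v$ is simplicial in $G$, the closed neighborhood of $v$ in $H$ equals that in $G$, so $H$ and $G$ agree on $N_G[v]$. Delete $v$: set $G' = G - v$, $H' = H - v$. Both are chordal (induced subgraphs of chordal graphs), $V(H') = V(G')$, and $E(G') \setminus E(H') = F$ still (we removed only edges of $H$), and this set is still nonempty. By the inductive hypothesis applied to $H' \subsetneq G'$, there is an edge $e \in F$ that is exposed in $G'$. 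The final — and I expect this to be the main technical obstacle — step is to lift exposedness from $G'$ back to $G$: I must check that an edge $e = xy$ exposed in $G - v$, where $v$ is simplicial in $G$ and $e$ is not incident to $v$, is still exposed in $G$. Using Lemma~\ref{lem:A}, exposedness of $xy$ in $G'$ means $N_{G'}(x) \cap N_{G'}(y)$ is complete; I need $N_G(x) \cap N_G(y)$ to be complete. These two sets differ by at most the vertex $v$, and $v \in N_G(x) \cap N_G(y)$ only if $v$ is adjacent to both $x$ and $y$, in which case $x, y \in N_G(v)$, which is a clique since $v$ is simplicial — so $N_G(x) \cap N_G(y) = (N_{G'}(x) \cap N_{G'}(y)) \cup \{v\}$ with $v$ adjacent to everything in the common neighborhood (as all these vertices lie in the clique $N_G[v]$ together with $v$). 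Hence the common neighborhood in $G$ is complete, i.e. $xy$ is exposed in $G$, completing the induction.

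The base case of this inner induction is when $|V(G)|$ is small enough that $G$ is complete or disconnected, already handled above; alternatively one may take as base case the case where \emph{every} simplicial vertex of $G$ meets an edge of $F$, which is exactly the "good" situation resolved two paragraphs up. Assembling: repeatedly peel off an exposed edge of $F$ until $F$ is exhausted, yielding the desired erasure sequence $G = G_0 \supset G_1 \supset \cdots \supset G_{|F|} = H$ with each $G_i$ chordal. The one place to be careful is ensuring the inductive setup never strands us with $F \neq \emptyset$ but no simplicial vertex of $G$ available on an $F$-edge and no vertex to delete — but since a nonempty chordal $G$ always has a simplicial vertex, and deleting one strictly decreases $f(G)$ while preserving $F$, the recursion is well-founded.
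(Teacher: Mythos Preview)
Your lifting step is where the argument breaks. You claim that if $v$ is simplicial in $G$ and $e=xy\in F$ is exposed in $G'=G-v$, then $e$ is exposed in $G$, because ``all these vertices [of $N_{G'}(x)\cap N_{G'}(y)$] lie in the clique $N_G[v]$.'' That containment is false in general. Take $V=\{v,x,y,w,u\}$, $E(G)=\{vx,vy,xy,wx,wy,uw\}$, and $E(H)=\{vx,vy,uw\}$. Both graphs are chordal; the simplicial vertices of $G$ are exactly $v$ and $u$, and every $G$-edge incident to either lies in $H$, so you are genuinely in your ``problematic configuration.'' Deleting $v$ gives $G'$ with $N_{G'}(x)\cap N_{G'}(y)=\{w\}$, so $xy$ is exposed in $G'$ and your induction may well return $e=xy$. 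But $N_G(x)\cap N_G(y)=\{v,w\}$ with $vw\notin E(G)$, so $xy$ is \emph{not} exposed in $G$. (The lemma itself is fine here: $wx\in F$ is exposed in $G$.) The earlier remark that ``$H$ and $G$ agree on $N_G[v]$'' is already wrong in this example---$G[N_G[v]]=K_3$ while $H[N_H[v]]$ is a path---and this is a symptom of the same issue: knowing $N_H(v)=N_G(v)$ and that $v$ is simplicial in $G$ does not force $v$ to be simplicial in $H$.

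The paper's proof avoids exactly this trap by choosing the deleted vertex $z$ to be simplicial in $H$ rather than in $G$. Then for any $e=xy\in F$ one has $\{x,y\}\not\subset N_H(z)$ (since $N_H(z)$ is a clique of $H$ and $xy\notin E(H)$), and in the case $N_H(z)=N_G(z)$ this gives $z\notin N_G(x)\cap N_G(y)$, so the common neighborhoods in $G$ and $G-z$ literally coincide and the lift is immediate. Your overall architecture (delete a vertex, induct, lift) is the right one; the fix is to anchor the deletion at a simplicial vertex of $H$, and then separately handle the case $N_H(z)\subsetneq N_G(z)$, which the paper does by applying Lemma~\ref{lem:C} to $G[N_G(z)]$.
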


\begin{proof}
Suppose that $(G, H)$ is a counterexample with $|V (G)|$ minimal. Then Lemma \ref{lem:C} ensures the existence of a simplicial vertex $z$ in $H$ (i.e., $N_H(z)$ is complete). First, suppose that $N_H(z) = N_G(z)$, so that $z$ is simplicial in $G$ as well and $G-z$, $H-z$ are chordal (Lemma \ref{lem:B}). Since $(G, H)$ is minimal, we can find an exposed edge $e = xy$ in $G-z$ such that $e \notin E(H-z)$. To see that $e$ is also exposed in $G$, notice that since $e \notin E(H-z)$ and hence $e \notin E(H)$ and $z$ is simplicial in $H$, we must have that $\{x, y\} \not\subset  N_H(z) = N_G(z)$, or in other words $z \notin N_G(x) \cap N_G(y)$. Thus G$[N_G(x) \cap N_G(y)] = G[N_{G-z}(x) \cap N_{G-z}(y)]$, which is complete and $e$ is exposed in $G$.

Now if $N_H(z) \not\subset N_G(z)$, then because $G[N_G(z)]$ is chordal we can apply Lemma \ref{lem:C} to see that either (i) $G[N_G(z)]$ is complete, in which case $z$ is simplicial in $G$ and any edge $zx$ with $x \in N_G(z) \backslash N_H(z)$ is exposed (Lemma \ref{lem:B}); or (ii) $G[N_G(z)]$ has two non-adjacent simplicial vertices $v_1,v_2$. Since $z$ is simplicial in $H$, we must have $\{v_1, v_2\} \not\subset N_H(z)$ and so one of $xv_1, xv_2 \notin E(H)$, but both are exposed in G.
\end{proof}

We can now prove the results stated in the introduction.

\begin{proof}[Proof of Proposition \ref{prop:main}]
Suppose $G$ is a chordal graph and suppose $H \subset G$ is a subgraph of $G$ that is also chordal.  By Lemma \ref{lem:D} there exists and edge $e$ of $G$ such that $G - e$ is chordal and contains $H$.  Continue removing edges in this way until we obtain the graph $H$.
\end{proof}

\begin{proof}[Proof of Theorem \ref{thm:main}]
Suppose $X$ is a shellable $d$-dimensional complex on vertex set $V = V(X)$.  We first note that if $|V| = d+1$ then $X$ is a simplex, and if $|V| = d+2$  it is not hard to see that any ordering on the facets of $X$ is a shelling: after we add the first facet $F_1$, every subsequent facet $F_i$ intersects the previous collection in $i-1$ faces of dimension $d-1$.  

Hence we can assume that $|V| = d+3$.  Let $(F_1, F_2, \dots, F_\ell)$ be some shelling order for $X$, where each $F_i$ is a facet of $X$ (a subset of $V$ of cardinality $d+1$).  Let $H = G(X)$ be the corresponding graph on the same vertex set, with edges given by $e_i = V \backslash F_i$. By Lemma \ref{lem:shelling} the graph $H$ is chordal.  Now suppose that we have a partial shelling $(F_{i_1}, F_{i_2}, \dots F_{i_k})$ of $X$ resulting in a subcomplex $Y \subset X$.  Again by Lemma \ref{lem:shelling} we have that $e_{i_1}, e_{i_2}, \dots, e_{i_k}$ is an erasure sequence in the complete graph $K_{d+3}$, resulting in the graph $G = G(Y)$ which by Lemma \ref{lem:B} is chordal.  Hence we see that $H$ is a chordal subgraph of the chordal graph $G$, both on the same vertex set $V$. By Proposition $\ref{prop:main}$ we can obtain $H$ by removing exposed edges in $G$, which (again by Lemma \ref{lem:shelling}) corresponds to completing the partial shelling into a shelling of $X$.  The result follows.
\end{proof}

\section{Conclusion}
Having established (a strengthening of) Simon's conjecture for $d$-dimensional complexes on $d+3$ vertices a natural question to ask is how the methods might extend to say $d+4$ vertices. Our work here relies on the the fact that a chordal graph $G$ always admits a exposed edge, and in fact we can remove exposed edges to obtain any chordal subgraph $H \subset G$.  Removing exposed edges correspond to making shelling moves on the `complementary' simplicial complex.

One can consider higher-dimensional analogues of these concepts in the context of $d$-clutters and of \emph{exposed circuits}.  In fact Simon's conjecture is equivalent to the statement that every $d$-clutter obtained by removing exposed circuits from the complete clutter $K_n^d$ admits an exposed circuit \cite{Doc}.  In the case of $d=2$ we can rely on various other properties of chordal graphs including the existence of simplicial vertices. Unfortunately these properties don't extend to higher dimensional.  In particular clutters obtained from removing exposed circuits from $K_n^d$ do not admit a simplicial vertex, or even a simplicial ridge \cite{BenBol}.

In \cite{BPZDec} the authors consider a notion of a \emph{decomposable} $d$-clutter that also generalize chordal graphs in a different way, and conjecture that such clutters admit a simplicial ridge.  As spelled out in \cite{BPZDec} this would imply Simon's conjecture.  As far as we know this conjecture is open even for the case of $d=3$.

\end{document}